\DeclareMathOperator{\diag}{diag}
\DeclareMathOperator{\argmin}{argmin}
\DeclareMathOperator{\VI}{VI}
\DeclareMathOperator{\SOL}{SOL}
\theoremstyle{thmstyleone}%
\newtheorem{theorem}{Theorem}
\theoremstyle{thmstyletwo}%
\newtheorem{remark}{Remark}%
\theoremstyle{thmstylethree}%
\newtheorem{definition}{Definition}%
\newtheorem{lemma}{Lemma}
\begin{document}

\title[Distributed Nash equilibrium seeking for constrained aggregative games]{Exponentially convergent distributed Nash equilibrium seeking for constrained aggregative games}


\author[1,2]{\fnm{Shu} \sur{Liang}}\email{sliang@tongji.edu.cn}
\author[1,2]{\fnm{Peng} \sur{Yi}}\email{yipeng@tongji.edu.cn}
\author[1,2]{\fnm{Yiguang} \sur{Hong}}\email{yghong@iss.ac.cn}
\author[3]{\fnm{Kaixiang} \sur{Peng}}\email{kaixiang@ustb.edu.cn}

\affil[1]{\orgdiv{Department of Control Science \& Engineering}, \orgname{Tongji University}, \orgaddress{\city{Shanghai}, \postcode{200092},  \country{China}}}
\affil[2]{\orgname{Shanghai Research Institute for Intelligent Autonomous Systems}, \orgaddress{\city{Shanghai}, \postcode{201210},  \country{China}}}
\affil[3]{\orgdiv{School of Automation and Electrical Engineering}, \orgname{University of Science and Technology Beijing} \orgaddress{\city{Beijing}, \postcode{100083}, \country{China}}}


\abstract{Distributed Nash equilibrium seeking of aggregative games is investigated and a continuous-time algorithm is proposed. The algorithm is designed by virtue of projected gradient play dynamics and distributed average tracking dynamics, and is applicable to games with constrained strategy sets and weight-balanced communication graphs. We obtain an exponential convergence of the proposed algorithm to the Nash equilibrium. Numerical examples illustrate the effectiveness of our methods.}

\keywords{Distributed algorithms, aggregative games, projected gradient play, weight-balanced graph, exponential convergence}



\maketitle

\section{Introduction}\label{sec1}
Distributed Nash equilibrium seeking with game-theoretic formulation and multi-agent system consideration has received research attention from the control and optimization communities, partially due to its applications in smart grids, communication networks and artificial intelligence. Various distributed algorithms for Nash equilibrium or generalized Nash equilibrium seeking have been developed, which guide a group of discrete-time or continuous-time agents to achieve the equilibrium based on local data and information exchange over a network graph \cite{Gharesifard2016Price,Ye2018Switching,Salehisadaghiani2019ADMM,Yi2019Operator,Zeng2019Cluster,Lei2020Asynchronous}.

Aggregative games have become an important type of games since the well-known Cournot duopoly model was proposed \cite{Osborne1994Course}, where the strategic interaction is clearly characterized via an aggregation term.
Recently, aggregative games have been considered in congestion control of communication networks \cite{Barrera2015Dynamic}, public environmental models \cite{Cornes2016Aggregative},
demand response management of power systems \cite{Ye2017Game}, and multiproduct-firm oligopoly \cite{Nocke2018Multiproduct}.
Because of the large-scale systems involved in these problems, seeking or computing the Nash equilibrium in a distributed manner is of practical significance.

We consider distributed Nash equilibrium seeking of aggregative games, where the aggregation information is unavailable to each local player and the communication graph can be directed with balanced weights. Similar problems have also been investigated in \cite{Koshal2016Distributed,Ye2017Game,Liang2017Distributed,Deng2019Balanced,Zhang2019Disturbances,Parise2020Distributed}.
In this work, an exponentially convergent algorithm design is proposed for the considered problem.
First, a distributed projected gradient play dynamics is designed, where we replace the global aggregation by its local estimation to calculate the gradient. Then an average tracking dynamics is augmented, where the distributed tracking signals are local parts of the aggregation. We analyze these interconnected dynamics and prove that our distributed algorithm achieves an exponential convergence to the Nash equilibrium. The contributions are as follows:
\begin{itemize}
\item A distributed Nash equilibrium seeking algorithm for aggregative game is developed. The algorithm is designed with two interconnected dynamics: a projected gradient play dynamics for equilibrium seeking and a distributed average tracking dynamics for estimation of the aggregation. The projected part can deal with local constrained strategy sets, which generalizes those in \cite{Ye2017Game,Zhang2019Disturbances}. Also, the distributed average tracking dynamics applies to weight-balanced directed graphs, which improves the algorithm in \cite{Liang2017Distributed}.

\item Exponential convergence of the proposed distributed algorithm is obtained, which is consistent with the convergence results in \cite{Yi2016Initialization,Ye2017Game,Deng2019Balanced} for unconstrained problems and is stronger than those in \cite{Yi2016Initialization,Deng2019Balanced} for constrained ones. In other words, this is a first work, to our knowledge, to propose an exponentially convergent distributed algorithm for aggregative games with local feasible constraints.
\end{itemize}

The rest of paper is organized as follows. Section 2 shows some basic
concepts and preliminary results, while Section 3 formulates the
distributed Nash equilibrium seeking problem of aggregative games. Then Section 4 presents our main results including algorithm design and analysis. Section 5 gives a numerical example to
illustrate the effectiveness of the proposed algorithm. Finally,
Section 6 gives concluding remarks.

\section{Preliminaries}
In this section, we give basic notations and related preliminary knowledge.

Denote $\mathbb{R}^n$ as the
$n$-dimensional real vector space; denote $\mathbf{1}_n =  (1,...,1)^{T} \in \mathbb{R}^n$, and $\mathbf{0}_n = (0,...,0)^{T} \in \mathbb{R}^n$. Denote $col(x_1,...,x_n) = (x_1^{T},...,x_n^{T})^{T}$ as the column vector
stacked with column vectors $x_1,...,x_n$, $\|\cdot\|$ as the Euclidean norm, and $I_n\in \mathbb{R}^{n\times n}$ as the identity matrix.   Denote $\nabla f$ as the gradient of $f$.

A set $C \subseteq \mathbb{R}^n$ is {\em convex} if $\lambda z_1
+(1-\lambda)z_2\in C$ for any $z_1, z_2 \in C$ and $0\leq \lambda \leq 1$.
For a closed convex set $C$, the {\em projection} map $P_{C}:\mathbb{R}^n \to C$ is defined as
\begin{equation*}
P_{C}(x) \triangleq \mathop{\argmin}\limits_{y\in C} \|x-y\|.
\end{equation*}
The projection map is $1$-Lipschitz continuous, i.e.,
\begin{equation*}
\|P_{C}(x)-P_{C}(y)\|\leq \|x-y\|, \quad \forall \, x, y\in \mathbb{R}^n\text{.}
\end{equation*}

A map $F:\mathbb{R}^n\rightarrow \mathbb{R}^n$ is said to be
{\em $\mu$-strongly monotone}  on a set $\Omega$ if
\begin{equation*}
(x-y)^{T}(F(x)-F(y))\geq \mu \|x - y\|^2, \quad \forall\, x,y\in\Omega.
\end{equation*}

Given a subset $\Omega \subseteq \mathbb{R}^n$ and a map $F: \Omega \to \mathbb{R}^n$, the {\em variational inequality} problem, denoted by $\VI(\Omega,F)$,
is to find a vector $x^*\in \Omega$ such that
\begin{equation*}
(y-x^*)^TF(x^*) \geq 0,\quad \forall\,y\in \Omega,
\end{equation*}
and the set of solutions to this problem is denoted by
$\SOL(\Omega,F)$ \cite{Facchinei2003Finite}. When $\Omega$ is closed and convex, the
solution of $\VI(\Omega,F)$ can be equivalently reformulated via
projection as follows:
\begin{equation*}
x\in \SOL(\Omega,F) \Leftrightarrow x = P_\Omega(x- \alpha F(x)),\,\forall\,\alpha>0.
\end{equation*}

It is known that the information exchange among agents
can be described by a graph. A graph with node set $\mathcal{V} = \{1,2,...,N\}$ and edge set $\mathcal{E}$ is written as $\mathcal{G}=(\mathcal{V},\,\mathcal{E})$ \cite{Godsil01}. The adjacency matrix of $\mathcal{G}$ can be written as $\mathcal{A} = [a_{ij}]_{N\times N}$, where $a_{ij} >0$ if $(j,i) \in \mathcal{E}$ (meaning that agent $j$ can send its information to agent $i$, or equivalently, agent $i$ can receive some information from agent $j$), and $a_{ij} =0$, otherwise. A graph is said to be strongly connected if, for any pair of vertices, there exists a sequence of intermediate vertices connected by edges. For $i \in \mathcal{V}$, the weighted in-degree and out-degree are $d_{\text{in}}^i =\sum_{j=1}^N a_{ij}$ and $d_{\text{out}}^i =\sum_{j=1}^N a_{ji}$, respectively. A graph is weight-balanced if $d_{\text{in}}^i = d_{\text{out}}^i, \forall\,i \in \mathcal{V}$. The Laplacian matrix is $L= \mathcal{D}_{\text{in}} - \mathcal{A}$, where $\mathcal{D}_{\text{in}} = \diag\{d_{\text{in}}^1, \ldots, d_{\text{in}}^N\} \in \mathbb{R}^{N \times N}$.  The following result is well known.

\begin{lemma}
{\em
Graph $\mathcal{G}$ is weight-balanced if and only if $L + L^T$ is positive semidefinite; it is strongly connected only if zero is a simple eigenvalue of $L$.}
\end{lemma}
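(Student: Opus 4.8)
The plan is to treat the two assertions separately, relying throughout on the identity $L\mathbf{1}_N=\mathbf{0}_N$ (each row of $L$ sums to zero) together with the companion fact that the column sums of $L$ are the degree imbalances, i.e. $\mathbf{1}_N^{T}L=(d_{\text{in}}^1-d_{\text{out}}^1,\dots,d_{\text{in}}^N-d_{\text{out}}^N)$. Thus weight-balancedness is exactly the statement $\mathbf{1}_N^{T}L=\mathbf{0}_N^{T}$, which I will convert into a spectral property of $L+L^{T}$.

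For the first equivalence I would expand the quadratic form. Since $x^{T}L^{T}x=x^{T}Lx$ for every $x$, we have $\tfrac12 x^{T}(L+L^{T})x=x^{T}Lx$, and a direct computation using $d_{\text{in}}^i=\sum_j a_{ij}$ gives the decomposition
\begin{equation*}
x^{T}Lx=\tfrac12\sum_{i,j}a_{ij}(x_i-x_j)^2+\tfrac12\sum_{i}(d_{\text{in}}^i-d_{\text{out}}^i)\,x_i^2 .
\end{equation*}
If $\mathcal{G}$ is weight-balanced the second sum vanishes and, since $a_{ij}\geq0$, the first sum is nonnegative, so $L+L^{T}\succeq0$. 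For the converse I would invoke the standard fact that a symmetric positive semidefinite matrix $M$ satisfies $v^{T}Mv=0\Rightarrow Mv=\mathbf{0}_N$. Because $L\mathbf{1}_N=\mathbf{0}_N$ one always has $\mathbf{1}_N^{T}(L+L^{T})\mathbf{1}_N=0$; hence $L+L^{T}\succeq0$ forces $(L+L^{T})\mathbf{1}_N=\mathbf{0}_N$, i.e. $L^{T}\mathbf{1}_N=\mathbf{0}_N$, which is precisely the column-sum (weight-balance) condition.

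For the second assertion I must show that strong connectivity implies $0$ is a \emph{simple} eigenvalue of $L$. I would first pin down the eigenspace: if $Lx=\mathbf{0}_N$ then $(Lx)_i=\sum_j a_{ij}(x_i-x_j)=0$ for each $i$, so choosing an index $i^\ast$ maximizing $x_i$ and using $a_{i^\ast j}\geq0$ forces $x_j=x_{i^\ast}$ for every in-neighbour $j$ of $i^\ast$; propagating this equality backwards along directed paths, which exist by strong connectivity, yields $x=x_{i^\ast}\mathbf{1}_N$, so $\ker L=\mathrm{span}\{\mathbf{1}_N\}$. To upgrade geometric simplicity to algebraic simplicity I would pass to $M=cI_N-L$ with $c\geq\max_i d_{\text{in}}^i$: then $M$ is entrywise nonnegative and irreducible (its off-diagonal pattern is that of $\mathcal{A}$, irreducible iff $\mathcal{G}$ is strongly connected), while $M\mathbf{1}_N=c\mathbf{1}_N$ exhibits $\mathbf{1}_N>0$ as a positive eigenvector. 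Perron--Frobenius then makes $c=\rho(M)$ a simple eigenvalue of $M$, so the corresponding eigenvalue $0$ of $L$ is simple.

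The routine direction (weight-balanced $\Rightarrow$ positive semidefinite) is mechanical. The delicate step is the algebraic simplicity of the zero eigenvalue: the maximum-principle argument only delivers \emph{geometric} multiplicity one, so the real work is the Perron--Frobenius passage, namely correctly identifying strong connectivity with irreducibility of $M$ and verifying that $\mathbf{1}_N$ is the Perron eigenvector. If one is content with geometric simplicity (nullity one), the eigenspace argument alone suffices and this obstacle disappears.
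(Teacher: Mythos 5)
Your proof is correct. Note that the paper itself offers no argument for this lemma at all—it is stated as ``well known'' and used without proof—so there is no authorial proof to compare against; what you have written is a complete, self-contained justification of the standard facts being invoked. Both halves check out: the quadratic-form decomposition $x^{T}Lx=\tfrac12\sum_{i,j}a_{ij}(x_i-x_j)^2+\tfrac12\sum_i(d_{\text{in}}^i-d_{\text{out}}^i)x_i^2$ is verified by expanding $\sum_{i,j}a_{ij}x_i^2=\sum_i d_{\text{in}}^i x_i^2$ and $\sum_{i,j}a_{ij}x_j^2=\sum_j d_{\text{out}}^j x_j^2$, and your converse correctly exploits the fact that a symmetric PSD matrix annihilates any vector on which its quadratic form vanishes, turning $\mathbf{1}_N^{T}(L+L^{T})\mathbf{1}_N=0$ into $L^{T}\mathbf{1}_N=\mathbf{0}_N$. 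You were also right to flag that the maximum-principle argument only yields geometric simplicity of the zero eigenvalue and to close the gap via Perron--Frobenius applied to $cI_N-L$; the only cosmetic point worth noting is that with the paper's convention $a_{ij}>0$ iff $(j,i)\in\mathcal{E}$, the digraph underlying $cI_N-L$ is the reverse of $\mathcal{G}$, which is strongly connected exactly when $\mathcal{G}$ is, so irreducibility still holds.
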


\section{Problem Formulation}\label{nea}
Consider an $N$-player aggregative game as follows. For $i\in \mathcal{V} \triangleq \{1,...,N\}$, the $i$th player aims to minimize its cost function $J_i(x_i,x_{-i}): \Omega \to \mathbb{R}$ by choosing the local decision variable $x_i$ from a local strategy set $\Omega_i \subset \mathbb{R}^{n_i}$, where $x_{-i} \triangleq
col(x_1,...,x_{i-1},x_{i+1},...,x_{N})$, $\Omega \triangleq
\Omega_1 \times \cdots \times \Omega_N \subset \mathbb{R}^n$ and $n = \sum_{i\in\mathcal{V}} n_i$. The {\em strategy profile} of this game is $\bm{x} \triangleq col(x_1,...,x_N)\in
\Omega$. The {\em aggregation} map $\sigma: \mathbb{R}^{n}\to \mathbb{R}^m$, to specify the cost function as $J_i(x_i,x_{-i}) = \vartheta_i(x_i,\sigma(\bm{x}))$ with a function $\vartheta_i: \mathbb{R}^{n_i+m} \to \mathbb{R}$, is defined as
\begin{equation}\label{eq:sigma}
\sigma(\bm{x}) \triangleq \frac{1}{N}\sum_{i=1}^N \varphi_i(x_i)\text{,}
\end{equation}
where $\varphi_i : \mathbb{R}^{n_i} \to \mathbb{R}^m$ is a map for the local contribution to the aggregation.

The concept of Nash equilibrium is introduced as follows.

\begin{definition}\label{defn:NE}
{\em
A strategy profile $\bm{x}^*$ is said to be an {\em
Nash equilibrium} of the game if
\begin{equation}\label{eq:K}
J_i(x_i^*,x_{-i}^*) \leq J_i(y_i,x_{-i}^*), \, \forall \, y_i\in \Omega_i, \, \forall \, i\in \mathcal{V}.
\end{equation}}
\end{definition}
Condition \eqref{eq:K} means that all players simultaneously take their own
best (feasible) responses at $x^*$, where no player can further
decrease its cost function by changing its decision variable unilaterally.

We assume that the strategy sets and the cost functions are well-conditioned in the following sense.

\noindent \textbf{A1}: For any $i\in \mathcal{V}$, $\Omega_i$ is nonempty, convex and closed.\\
\noindent \textbf{A2}: For any $i\in \mathcal{V}$, the cost function $J_{i}(x_i,x_{-i})$ and the map $\varphi(x_i)$ are differentiable with respect to $x_i$.

In order to explicitly show the aggregation of the game, let us define map $G_i: \mathbb{R}^{n_i} \times \mathbb{R}^m \to \mathbb{R}^{n_i}, i\in\mathcal{V}$ as
\begin{align}\label{eq:Gi}
G_i(x_i,\eta_i) & \triangleq \nabla_{x_i} J_{i}(\cdot,x_{-i})\mid_{\sigma(\bm{x}) = \eta_i}\\
\nonumber
& = (\nabla_{x_i} \vartheta_i(\cdot,\sigma) + \frac{1}{N} \nabla_{\sigma} \vartheta_i(x_i,\cdot)^T \nabla \varphi_i)\mid_{\sigma = \eta_i}\text{.}
\end{align}
Also, let $G(\bm{x},\bm{\eta}) \triangleq col(G_1(x_1,\eta_1), ..., G_N(x_N,\eta_N))$. Clearly, $G(\bm{x},\bm{1}_N\otimes \sigma(\bm{x})) = F(\bm{x})$, where the {\em pseudo-gradient} map $F:\mathbb{R}^n \to \mathbb{R}^n$ is defined as
\begin{equation*}\label{eq:F}
F(\bm{x}) \triangleq col\{\nabla_{x_1} J_{1}(\cdot,x_{-1}),..., \nabla_{x_N} J_{N}(\cdot,x_{-N})\}.
\end{equation*}

Under \textbf{A1} and \textbf{A2}, the Nash equilibrium of the game is a solution of the variational inequality problem
$\VI(\Omega,F)$, referring to \cite{Facchinei2003Finite}. Moreover, we need the following assumptions to ensure the existence and uniqueness of the Nash equilibrium and also to facilitate algorithm design.

\noindent \textbf{A3}: The map $F(\bm{x})$ is $\mu$-strongly monotone on $\Omega$ for some constant $\mu >0$.\\
\noindent \textbf{A4}: The map $G(\bm{x},\bm{\eta})$ is $\kappa_1$-Lipschitz continuous with respect to $\bm{x} \in \Omega$ and $\kappa_2$-Lipschitz continuous with respect to $\bm{\eta}$ for some constants $\kappa_1,\kappa_2>0$. Also, for any $i\in\mathcal{V}$, $\varphi_i$ is $\kappa_3$-Lipschitz continuous on $\Omega_i$ for some constant $\kappa_3>0$.

Note that the strong monotonicity of the pseudo-gradient map $F$ has been widely adopted in the literature such as \cite{Ye2017Game,Ye2018Switching,Salehisadaghiani2019ADMM,Yi2019Operator,Deng2019Balanced,Zhang2019Disturbances,Parise2020Distributed}.

The following fundamental result is from \cite{Facchinei2003Finite}.
\begin{lemma}\label{lem:NE}
{\em
Under \textbf{A1}-\textbf{A4}, the considered game admits a unique Nash equilibrium $\bm{x}^*$.}
\end{lemma}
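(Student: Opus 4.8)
The plan is to reduce the existence and uniqueness of the Nash equilibrium to the unique solvability of $\VI(\Omega,F)$ and then settle the latter by a contraction argument. The reduction is already available: under \textbf{A1}--\textbf{A2} the excerpt records that $\bm{x}^*$ is a Nash equilibrium if and only if $\bm{x}^* \in \SOL(\Omega,F)$, so it suffices to show that $\VI(\Omega,F)$ has exactly one solution. By \textbf{A1} each $\Omega_i$ is nonempty, closed and convex, hence the product $\Omega = \Omega_1 \times \cdots \times \Omega_N$ is nonempty, closed and convex, the complete set on which the projection $P_\Omega$ is well defined and $1$-Lipschitz.

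First I would verify that $F$ is globally Lipschitz on $\Omega$. Writing $F(\bm{x}) = G(\bm{x},\bm{1}_N\otimes \sigma(\bm{x}))$ and using the Lipschitz bounds in \textbf{A4} together with the Cauchy--Schwarz estimate $\|\sigma(\bm{x})-\sigma(\bm{y})\| \le (\kappa_3/\sqrt{N})\|\bm{x}-\bm{y}\|$ (which follows from \eqref{eq:sigma} and the $\kappa_3$-Lipschitz continuity of each $\varphi_i$), one obtains $\|F(\bm{x})-F(\bm{y})\| \le L\|\bm{x}-\bm{y}\|$ with $L = \kappa_1 + \kappa_2\kappa_3$. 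Combined with the $\mu$-strong monotonicity of $F$ from \textbf{A3}, these are exactly the ingredients needed for the fixed-point map to be a contraction; note also that $\mu \le L$, since strong monotonicity and the Cauchy--Schwarz inequality give $\mu\|\bm{x}-\bm{y}\|^2 \le (\bm{x}-\bm{y})^T(F(\bm{x})-F(\bm{y})) \le L\|\bm{x}-\bm{y}\|^2$.

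Next I would invoke the projection characterization from the preliminaries: for any $\alpha>0$, $\bm{x}\in \SOL(\Omega,F)$ if and only if $\bm{x}$ is a fixed point of $T_\alpha(\bm{x}) \triangleq P_\Omega(\bm{x}-\alpha F(\bm{x}))$. Using the $1$-Lipschitz property of $P_\Omega$ and expanding the square,
\[
\|T_\alpha(\bm{x})-T_\alpha(\bm{y})\|^2 \le (1-2\alpha\mu+\alpha^2 L^2)\|\bm{x}-\bm{y}\|^2 .
\]
Choosing any $\alpha\in(0,2\mu/L^2)$ makes the coefficient lie in $[0,1)$ (nonnegativity uses $\mu\le L$), so $T_\alpha$ is a contraction on the complete set $\Omega$. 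By the Banach fixed-point theorem $T_\alpha$ has a unique fixed point, which is therefore the unique element of $\SOL(\Omega,F)$; by the reduction this is the unique Nash equilibrium $\bm{x}^*$.

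The argument is essentially routine once the constants are in place, and the one step deserving care is the Lipschitz estimate for $F$: the aggregation couples all blocks through $\sigma$, so the bound must be propagated through $\bm{1}_N\otimes\sigma(\bm{x})$ and the per-player maps $\varphi_i$ rather than read off directly from \textbf{A4}. I would also emphasize that strong monotonicity is precisely what lets the single contraction deliver both existence and uniqueness simultaneously, so no compactness of $\Omega$ is required---an important point since the strategy sets may well be unbounded.
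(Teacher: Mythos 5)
The paper gives no proof of this lemma; it simply cites Facchinei and Pang. Your contraction argument is exactly the standard proof behind that citation (strongly monotone plus Lipschitz $F$ on a nonempty closed convex $\Omega$ implies $\VI(\Omega,F)$ has a unique solution), and the quantitative steps are all correct: the estimate $\|\sigma(\bm{x})-\sigma(\bm{y})\|\le (\kappa_3/\sqrt{N})\|\bm{x}-\bm{y}\|$ combined with \textbf{A4} does give $\|F(\bm{x})-F(\bm{y})\|\le(\kappa_1+\kappa_2\kappa_3)\|\bm{x}-\bm{y}\|$ (this is precisely the constant $\kappa$ the paper uses later), the bound $\mu\le L$ is right, and $1-2\alpha\mu+\alpha^2L^2\in[0,1)$ for $\alpha\in(0,2\mu/L^2)$, so $T_\alpha$ is a contraction on the complete set $\Omega$.

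The one genuine gap is in your reduction step. You assert that the paper records ``$\bm{x}^*$ is a Nash equilibrium \emph{if and only if} $\bm{x}^*\in\SOL(\Omega,F)$'' under \textbf{A1}--\textbf{A2}; the paper records only the forward implication (every Nash equilibrium solves the VI). That direction gives you uniqueness, but \emph{existence} requires the converse: that the unique VI solution is actually a Nash equilibrium, i.e.\ that the first-order condition $(y_i-x_i^*)^T\nabla_{x_i}J_i(x_i^*,x_{-i}^*)\ge 0$ for all $y_i\in\Omega_i$ suffices for $x_i^*$ to minimize $J_i(\cdot,x_{-i}^*)$ over $\Omega_i$. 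This is not automatic from differentiability alone; it needs convexity of each $J_i(\cdot,x_{-i})$ on $\Omega_i$. Fortunately it follows from \textbf{A3}: applying $\mu$-strong monotonicity of $F$ to pairs $\bm{x},\bm{y}\in\Omega$ differing only in the $i$th block shows that $x_i\mapsto\nabla_{x_i}J_i(x_i,x_{-i})$ is $\mu$-strongly monotone on $\Omega_i$, hence $J_i(\cdot,x_{-i})$ is strongly convex there and the first-order condition is sufficient. Add that one line and the proof is complete.
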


In the distributed design for our aggregative game, the communication topology for each player to exchange information is assumed as follows.

\noindent \textbf{A5}: The network graph $\mathcal{G}$ is strongly connected and weight-balanced.

The goal of this paper is to design a distributed algorithm to seek the
Nash equilibrium for the considered aggregative game over weight-balanced directed graph.
\section{Main Results}

In this section, we first propose our distributed algorithm and then analyze its convergence.

\subsection{Algorithm}
Our distributed continuous-time algorithm for Nash equilibrium seeking of the considered aggregative game is designed as the following differential equations:
\begin{equation}\label{eq:algorithm2}
\left\{\begin{aligned}
\dot{x}_i & = P_{\Omega_i}(x_i - \alpha G_i(x_i,\eta_i))-x_i,&& x_i(0)\in \Omega_i\\
\dot{\theta}_i & = \beta\sum_{j = 1}^Na_{ij}(\eta_j - \eta_i),&& \theta_i(0) = \bm{0}_m\\
\eta_i & =  \theta_i + \varphi_i(x_i)
\end{aligned}\right.
\end{equation}
Algorithm parameters $\alpha$ and $\beta$ satisfy
\begin{equation}\label{eq:parametersCondition}
\begin{aligned}
& 0 < \alpha < \frac{2\mu\beta\lambda_2 - 4\kappa_2\kappa_3}{\kappa^2\beta\lambda_2 + 2\mu\kappa_2\kappa_3}, \\
& \beta > \frac{2\kappa_2\kappa_3}{\mu\lambda_2},
\end{aligned}
\end{equation}
where
\begin{equation}
\kappa \triangleq \kappa_1 + \kappa_2 \cdot \kappa_3,
\end{equation}
and $\lambda_2$ is the smallest positive eigenvalue of $\frac{1}{2}(L+L^T)$ ($L$ is the Laplacian matrix).

The compact form of \eqref{eq:algorithm2} can be written as
\begin{equation}\label{eq:algorithmCompact}
\left\{\begin{aligned}
\dot{\bm{x}} & = P_{\Omega}(\bm{x} - \alpha G(\bm{x},\bm{\eta}))-\bm{x},&& \bm{x}(0)\in \Omega\\
\dot{\bm{\theta}} & = - \beta L\otimes I_m \bm{\eta},&& \bm{\theta}(0) = \bm{0}_{mN}\\
\bm{\eta} & =  \bm{\theta} + \bm{\varphi}(\bm{x})
\end{aligned}\right.
\end{equation}
where $\bm{\varphi}(\bm{x}) = col(\varphi_1(x_1), ..., \varphi_N(x_N))$. Furthermore, we can rewrite \eqref{eq:algorithmCompact} as
\begin{equation}\label{eq:algorithmCompact2}
\left\{\begin{aligned}
\dot{\bm{x}} & = P_{\Omega}(\bm{x} - \alpha  G(\bm{x},\bm{\eta}))-\bm{x},&& \bm{x}(0)\in \Omega\\
\dot{\bm{\eta}} & = - \beta L\otimes I_m\bm{\eta} + \frac{d}{dt}\bm{\varphi}(\bm{x}),&& \bm{\eta}(0) = \bm{\varphi}(\bm{x}(0))
\end{aligned}\right.
\end{equation}
The dynamics with respect to $\bm{x}$ can be regarded as distributed projected gradient play dynamics with the global aggregation $\sigma(\bm{x})$ replaced by local variables $\eta_1, ..., \eta_N$. The dynamics with respect to $\bm{\eta}$ is distributed average tracking dynamics that estimates the value of $\sigma(\bm{x})$. The design idea is similar to \cite{Ye2017Game,Liang2017Distributed}. Here, we use projection operation to deal with local feasible constraints, and replace the nonsmooth tracking dynamics in \cite{Liang2017Distributed} by this simple one to cope with weight-balanced graphs.
\subsection{Analysis}
First, we verify that the equilibrium of dynamics \eqref{eq:algorithmCompact2} coincides with the Nash equilibrium $\bm{x}^*$.
\begin{theorem}\label{thm:equilibrium}
{\em
Under \textbf{A1} - \textbf{A5}, the equilibrium of dynamics \eqref{eq:algorithmCompact2} is
\begin{equation}\label{eq:equilibrium}
\begin{bmatrix}
\bm{x}\\
\bm{\eta}
\end{bmatrix} =
\begin{bmatrix}
\bm{x}^*\\
\bm{\eta}^*
\end{bmatrix} =
\begin{bmatrix}
\bm{x}^*\\
\bm{1}_N\otimes \sigma(\bm{x}^*)
\end{bmatrix}.
\end{equation}}
\end{theorem}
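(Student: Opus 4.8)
The plan is to characterize the equilibrium by setting both right-hand sides to zero and showing the resulting conditions force $(\bm{x},\bm{\eta}) = (\bm{x}^*, \bm{1}_N\otimes\sigma(\bm{x}^*))$. It is cleaner to argue in the original coordinates \eqref{eq:algorithmCompact}, whose state is $(\bm{x},\bm{\theta})$ with $\bm{\eta} = \bm{\theta} + \bm{\varphi}(\bm{x})$, since the two forms share the same trajectories and hence the same equilibria. At any equilibrium $\dot{\bm{\theta}} = \bm{0}$ yields $\beta(L\otimes I_m)\bm{\eta} = \bm{0}$, so $(L\otimes I_m)\bm{\eta} = \bm{0}$. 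By A5 the graph is strongly connected, so Lemma 1 gives that zero is a simple eigenvalue of $L$ with $L\bm{1}_N = \bm{0}$; hence $\ker(L\otimes I_m) = \{\bm{1}_N\otimes c : c\in\mathbb{R}^m\}$ and the $\eta_i$ reach consensus, $\bm{\eta} = \bm{1}_N\otimes c$ for some $c\in\mathbb{R}^m$ still to be identified.

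Next I would pin down the consensus value $c$ by a conservation argument, which is the step that genuinely uses weight-balancedness and the initialization $\bm{\theta}(0)=\bm{0}_{mN}$. Since $\mathcal{G}$ is weight-balanced, $d_{\text{in}}^i = d_{\text{out}}^i$ gives $\bm{1}_N^T L = \bm{0}_N^T$, so along any trajectory $\frac{d}{dt}(\bm{1}_N^T\otimes I_m)\bm{\theta} = -\beta(\bm{1}_N^T L\otimes I_m)\bm{\eta} = \bm{0}_m$. Combined with $\bm{\theta}(0)=\bm{0}_{mN}$ this makes $\sum_{i=1}^N\theta_i \equiv \bm{0}_m$ invariant, whence $\sum_{i=1}^N\eta_i = \sum_{i=1}^N\varphi_i(x_i) = N\sigma(\bm{x})$ by \eqref{eq:sigma}. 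Evaluating this at the equilibrium $\bm{\eta} = \bm{1}_N\otimes c$ gives $Nc = N\sigma(\bm{x})$, i.e. $c = \sigma(\bm{x})$, so $\bm{\eta} = \bm{1}_N\otimes\sigma(\bm{x})$.

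It then remains to solve the $\bm{x}$-equation. Setting $\dot{\bm{x}} = \bm{0}$ gives $P_\Omega(\bm{x} - \alpha G(\bm{x},\bm{\eta})) = \bm{x}$, and substituting $\bm{\eta} = \bm{1}_N\otimes\sigma(\bm{x})$ together with the identity $G(\bm{x},\bm{1}_N\otimes\sigma(\bm{x})) = F(\bm{x})$ recorded after \eqref{eq:Gi} reduces this to $P_\Omega(\bm{x} - \alpha F(\bm{x})) = \bm{x}$. By the projection characterization of $\VI(\Omega,F)$ stated in Section 2, this is exactly $\bm{x}\in\SOL(\Omega,F)$; since A1--A4 hold, Lemma \ref{lem:NE} guarantees a unique Nash equilibrium $\bm{x}^*$, which is the unique solution of $\VI(\Omega,F)$, forcing $\bm{x}=\bm{x}^*$ and hence $\bm{\eta}=\bm{1}_N\otimes\sigma(\bm{x}^*)=\bm{\eta}^*$.

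The main obstacle is the second step: the equilibrium equations alone leave the consensus level $c$ free, so the conclusion $c=\sigma(\bm{x})$ is not a purely algebraic consequence of $\dot{\bm{x}}=\dot{\bm{\theta}}=\bm{0}$ but depends on the invariant $\sum_{i=1}^N\theta_i\equiv\bm{0}_m$. Establishing that invariant --- and thereby confining the dynamics to the manifold on which the average of the local estimates equals the true aggregate --- is where weight-balancedness (A5) and the prescribed initialization are indispensable; the remaining arguments are direct appeals to the $\VI$ reformulation and the uniqueness Lemma \ref{lem:NE}.
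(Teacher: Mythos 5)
Your proposal is correct and follows essentially the same route as the paper's own proof: consensus of the $\eta_i$ from strong connectivity, identification of the consensus value via the weight-balanced conservation law induced by the initialization (you phrase it as $\sum_i \theta_i \equiv \bm{0}_m$, the paper as $\frac{1}{N}\sum_i \eta_i \equiv \sigma(\bm{x})$ — the same invariant in different coordinates), and then the projection characterization of $\VI(\Omega,F)$ with uniqueness from Lemma~\ref{lem:NE}. Your explicit remark that the consensus level is not fixed by the equilibrium equations alone but by the invariant manifold selected by $\bm{\theta}(0)=\bm{0}_{mN}$ is a point the paper uses implicitly but does not emphasize.
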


\begin{proof}
The equilibrium of \eqref{eq:algorithmCompact2} should satisfy
\begin{equation*}
\begin{aligned}
\bm{0}_n & = P_{\Omega}(\bm{x} - \alpha  G(\bm{x},\bm{\eta}))-\bm{x}\\
\bm{0}_{mN} & = - L\otimes I_m\bm{\eta}
\end{aligned}
\end{equation*}
which are obtained by setting $\dot{\bm{x}}, \dot{\bm{\eta}}$ and $\frac{d}{dt}\bm{\varphi}(\bm{x})$ as zeros. Since $\mathcal{G}$ is strongly connected, $L\otimes I_m\bm{\eta} = \bm{0}$ implies $\eta_1 = \eta_2 = \cdots = \eta_N = \eta^\diamond$ for some $\eta^\diamond$ to be further determined.

Since $\mathcal{G}$ is weight-balanced, $\bm{1}_N^T L = \bm{0}_N^T$. Combining this property with dynamics \eqref{eq:algorithmCompact2} yields
\begin{equation*}
\frac{1}{N}\sum_{i=1}^N \dot{\eta}_i = \frac{d}{dt} \sigma(\bm{x}), \quad \frac{1}{N}\sum_{i=1}^N \eta_i(0) = \sigma(\bm{x}(0)).
\end{equation*}
As a result,
\begin{equation}\label{eq:identity}
\frac{1}{N}\sum_{i=1}^N \eta_i = \sigma(\bm{x}),
\end{equation}
which implies that any equilibrium pair ($\bm{x}^\diamond, \bm{1}_N\otimes \eta^\diamond)$ should also satisfy $\eta^\diamond = \sigma(\bm{x}^\diamond)$.

Substituting $\bm{x}^\diamond, \bm{1}_N\otimes \eta^\diamond$ into the projected equation for the equilibrium yields
\begin{align*}
\bm{0}_n & = P_{\Omega}(\bm{x}^\diamond - \alpha  G(\bm{x}^\diamond,\bm{1}_N\otimes\eta^\diamond))-\bm{x}^\diamond\\
& = P_{\Omega}(\bm{x}^\diamond - \alpha F(\bm{x}^\diamond))-\bm{x}^\diamond,
\end{align*}
which indicates $\bm{x}^\diamond = \bm{x}^*$. Therefore, the point given in \eqref{eq:equilibrium} is the equilibrium of \eqref{eq:algorithmCompact2}. This completes the proof.
\end{proof}

In view of the identity \eqref{eq:identity} derived from \eqref{eq:algorithmCompact2}, let
\begin{equation*}
\bm{y} \triangleq \bm{\eta} - \bm{1}_N\otimes \sigma(\bm{x}).
\end{equation*}
Then it follows from  $L \bm{1}_N = \bm{0}_N$ and \eqref{eq:algorithmCompact2} that
\begin{align}
\label{eq:subsystemX}
\dot{\bm{x}} & = P_{\Omega}\big(\bm{x} - \alpha  G(\bm{x},\bm{1}_N\otimes \sigma(\bm{x}) + \bm{y})\big)-\bm{x}\\
\label{eq:subsystemY}
\dot{\bm{y}} & = - \beta L\otimes I_m\bm{y} + \frac{d}{dt}\big(\bm{\varphi}(\bm{x}) - \bm{1}_N\otimes \sigma(\bm{x})\big)\\
\nonumber & = - \beta L\otimes I_m\bm{y} + \big(\nabla\bm{\varphi}(\bm{x}) - \bm{1}_N\otimes \nabla\sigma(\bm{x})\big)^T\cdot\\
\nonumber & \quad \quad \quad \big(P_{\Omega}\big(\bm{x} - \alpha  G(\bm{x},\bm{1}_N\otimes \sigma(\bm{x}) + \bm{y})\big)-\bm{x}\big)
\end{align}

The whole dynamics with respect to $\bm{x}$ and $\bm{y}$ consists of two interconnected subsystems as shown in Fig. \ref{fig:structure}.
\begin{figure}[h]
\begin{center}
\includegraphics[width=8.4cm]{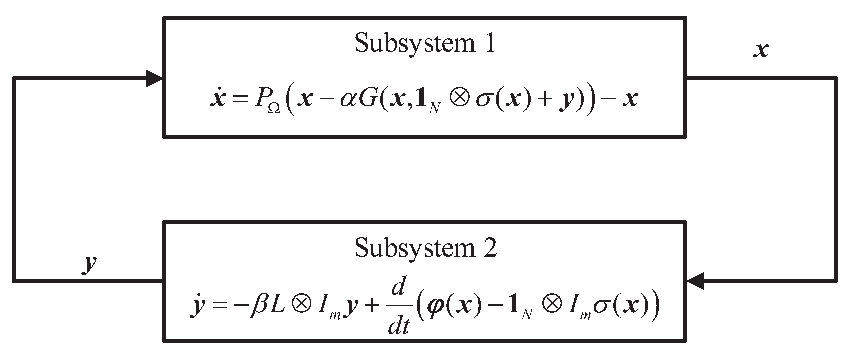}    
\caption{The interconnection of two subsystems \eqref{eq:subsystemX} and \eqref{eq:subsystemY}.} \label{fig:structure}
\end{center}
\end{figure}
Each dynamical subsystem has its own state variable, equilibrium point and external input.

Our convergence results are given in the following theorem.
\begin{theorem}\label{thm:convergence}
{\em
Under \textbf{A1}-\textbf{A5}, the distributed continuous-time algorithm \eqref{eq:algorithm2} with parameters satisfying \eqref{eq:parametersCondition} converges to the Nash equilibrium with an exponential convergence rate.
}
\end{theorem}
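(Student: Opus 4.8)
The plan is to regard \eqref{eq:subsystemX}--\eqref{eq:subsystemY} as two interconnected subsystems and to obtain exponential convergence by an exponential-ISS plus small-gain argument built on Lemma~\ref{lem:eISSLyapunov}. Write $p \triangleq \|\bm{x}-\bm{x}^*\|$ and $q \triangleq \|\bm{y}\|$; by Theorem~\ref{thm:equilibrium} the target point is $(\bm{x}^*,\bm{0})$, so it suffices to drive $(p,q)\to 0$ exponentially. I first record that the pseudo-gradient $F$ is $\kappa$-Lipschitz: chaining the $\kappa_1$- and $\kappa_2$-bounds of \textbf{A4} through $\sigma(\bm{x})=\tfrac1N\sum_i\varphi_i(x_i)$ and the $\kappa_3$-bound on each $\varphi_i$ gives exactly the constant $\kappa=\kappa_1+\kappa_2\kappa_3$, which is why it governs the upper bound on $\alpha$ in \eqref{eq:parametersCondition}.

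First I would show the $\bm{x}$-subsystem \eqref{eq:subsystemX} is eISS with input $\bm{y}$. Setting $H(\bm{x},\bm{y})\triangleq G(\bm{x},\bm{1}_N\otimes\sigma(\bm{x})+\bm{y})$, so that $H(\bm{x},\bm{0})=F(\bm{x})$, and $V_1=\tfrac12 p^2$, I would split $P_{\Omega}(\bm{x}-\alpha H(\bm{x},\bm{y}))-\bm{x}^*$ into the undisturbed map $P_{\Omega}(\bm{x}-\alpha F(\bm{x}))-\bm{x}^*$ and a perturbation. The $1$-Lipschitz property of $P_\Omega$ bounds the perturbation by $\alpha\kappa_2 q$, while nonexpansiveness of $P_\Omega$ together with the $\mu$-strong monotonicity of $F$ (\textbf{A3}) contracts the undisturbed map by the factor $\sqrt{1-2\alpha\mu+\alpha^2\kappa^2}<1$ whenever $0<\alpha<2\mu/\kappa^2$. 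Collecting terms gives $\dot V_1\le -\omega_1 p^2+\omega_1\gamma_1 pq$, so Lemma~\ref{lem:eISSLyapunov} yields $(\omega_1,\gamma_1)$-eISS with $\omega_1\ge\tfrac{\alpha}{2}(2\mu-\alpha\kappa^2)$ and gain $\gamma_1\le\tfrac{2\kappa_2}{2\mu-\alpha\kappa^2}$.

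Next I would show the $\bm{y}$-subsystem \eqref{eq:subsystemY} is eISS with input $\bm{x}-\bm{x}^*$, using $V_2=\tfrac12 q^2$. The decisive structural fact is the conservation law \eqref{eq:identity}, which keeps $\bm{y}$ orthogonal to $\mathrm{span}(\bm{1}_N\otimes I_m)$; since by Lemma~1 the matrix $\tfrac12(L+L^T)$ is positive semidefinite with simple zero eigenvalue along $\bm{1}_N$, the dissipation obeys $\bm{y}^T(L\otimes I_m)\bm{y}\ge\lambda_2 q^2$. I would then control the coupling term $\tfrac{d}{dt}\bm{\varphi}(\bm{x})=(\nabla\bm{\varphi}-\bm{1}_N\otimes\nabla\sigma)^T\dot{\bm{x}}$ by bounding the Jacobian factor in norm by $2\kappa_3$ (using the $\kappa_3$-Lipschitz $\varphi_i$) and the residual by $\|\dot{\bm{x}}\|\le(2+\alpha\kappa)p+\alpha\kappa_2 q$. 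The $q$-proportional part of the residual is absorbed into the Laplacian dissipation, which is exactly what forces $\beta$ to be large; what remains is $\dot V_2\le -\omega_2 q^2+\omega_2\gamma_2 qp$ with $\omega_2=\beta\lambda_2-2\alpha\kappa_2\kappa_3>0$ and $\gamma_2=\tfrac{2\kappa_3(2+\alpha\kappa)}{\beta\lambda_2-2\alpha\kappa_2\kappa_3}$.

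Finally I would close the loop with a composite Lyapunov function $U=c_1V_1+c_2V_2$. Adding the two estimates gives $\dot U\le -c_1\omega_1 p^2-c_2\omega_2 q^2+(c_1\omega_1\gamma_1+c_2\omega_2\gamma_2)pq$, a quadratic form in $(p,q)$ that is negative definite exactly when the small-gain condition $\gamma_1\gamma_2<1$ holds; balancing the weights via $c_1\omega_1\gamma_1=c_2\omega_2\gamma_2$ makes this explicit and produces $\dot U\le-\delta U$. Because $U$ is equivalent to $p^2+q^2$, this gives exponential decay of $p$ and $q$, hence of $\|\bm{\eta}-\bm{\eta}^*\|\le q+\kappa_3 p$, i.e.\ exponential convergence to the Nash equilibrium. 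Substituting $\gamma_1,\gamma_2$ into $\gamma_1\gamma_2<1$ turns it into a lower bound on $\beta$ with denominator $\lambda_2(2\mu-\alpha\kappa^2)$ and numerator proportional to $\kappa_2\kappa_3$, i.e.\ precisely the structure of \eqref{eq:parametersCondition}. I expect the $\bm{y}$-subsystem to be the main obstacle: one must simultaneously exploit the orthogonality from \eqref{eq:identity} to recover the strictly positive constant $\lambda_2$ instead of a vanishing one, and dominate the derivative-feedback coupling $\tfrac{d}{dt}\bm{\varphi}(\bm{x})$ whose own $\bm{y}$-dependence must be overpowered by the Laplacian term — this domination is the genuine reason a lower bound on $\beta$ cannot be avoided.
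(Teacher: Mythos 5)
Your proposal follows the same overall architecture as the paper's proof: split the dynamics into the $\bm{x}$-subsystem \eqref{eq:subsystemX} and the $\bm{y}$-subsystem \eqref{eq:subsystemY}, establish eISS for each with the quadratic Lyapunov functions of Lemma~\ref{lem:eISSLyapunov}, use the conservation law \eqref{eq:identity} to keep $\bm{y}$ orthogonal to $\bm{1}_N\otimes I_m$ so that the Rayleigh quotient produces the strictly positive $\lambda_2$, and close via $\gamma_1\gamma_2<1$. Two steps differ in execution. In Step~1 you contract $P_\Omega(\cdot-\alpha F(\cdot))$ by the factor $\sqrt{1-2\alpha\mu+\alpha^2\kappa^2}$ and linearize with $1-\sqrt{1-c}\ge c/2$, whereas the paper proves $\omega_1$-strong monotonicity of $H(\bm{x})=\bm{x}-P_\Omega(\bm{x}-\alpha F(\bm{x}))$ by rationalizing a difference of norms (hence the denominator $2+\alpha\kappa$); both are valid and yield constants of the same shape, yours in fact giving a slightly smaller $\gamma_1$. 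In the closing step you use a weighted composite Lyapunov function $U=c_1V_1+c_2V_2$ and check negative definiteness of the quadratic form in $(p,q)$ under the balancing $c_1\omega_1\gamma_1=c_2\omega_2\gamma_2$, which reduces exactly to $\gamma_1\gamma_2<1$ and gives $\dot U\le-\delta U$; the paper instead combines the two trajectory-level eISS estimates directly. Your Lyapunov closing is the more self-contained of the two, since it avoids juggling suprema of two different exponentials.

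The one point to repair is a constant in Step~2. You bound $\|\nabla\bm{\varphi}(\bm{x})-\bm{1}_N\otimes\nabla\sigma(\bm{x})\|$ by $2\kappa_3$ via the triangle inequality, which propagates into $\omega_2=\beta\lambda_2-2\alpha\kappa_2\kappa_3$ and into $\gamma_2$ with an extra factor of $2$. With these constants the small-gain condition demands roughly twice the value of $\beta$ that \eqref{eq:parametersCondition} guarantees (as $\alpha\to 0$ you would need $\beta>4\kappa_2\kappa_3/(\lambda_2\mu)$ versus the stated threshold $2\kappa_2\kappa_3/(\lambda_2\mu)$), so the argument as written proves the theorem only on a strict subset of the admissible parameter range. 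The fix is to note that this Jacobian factors as $\big(I_{Nm}-\tfrac1N(\bm{1}_N\bm{1}_N^T)\otimes I_m\big)\nabla\bm{\varphi}(\bm{x})$, where the first factor is an orthogonal projector of norm one, so the whole expression is bounded by $\kappa_3$, not $2\kappa_3$. With that sharpening, $\gamma_1\gamma_2<1$ is implied by \eqref{eq:parametersCondition} (indeed with a little room to spare, since your $\gamma_1$ is tighter than the paper's), and the rest of your argument goes through.
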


\begin{proof}
Let
\begin{align*}
\omega_1 & \triangleq \frac{2\alpha\cdot\mu - \alpha^2\cdot \kappa^2}{2+\alpha\cdot \kappa},\\
\omega_2 & \triangleq \beta\cdot \lambda_2 - \alpha \cdot \kappa_2 \cdot \kappa_3,\\
\xi_1 & \triangleq \alpha\cdot\kappa_2,\\
\xi_2 & \triangleq \kappa_3(2+\alpha\cdot \kappa),
\end{align*}
and
\begin{equation*}
\gamma^* \triangleq \omega_1 + \omega_2 - \sqrt{(\omega_1 - \omega_2)^2 + 4\xi_1\xi_2}.
\end{equation*}
We will show that the rate of exponential convergence of our algorithm is $\gamma^*$.
It follows from \eqref{eq:parametersCondition} that $\gamma^* > 0$, and
\begin{equation*}
(\omega_1 - \frac{\gamma^*}{2})\cdot(\omega_2 - \frac{\gamma^*}{2}) = \xi_1\xi_2
\end{equation*}

Let
\begin{align*}
H(\bm{x}) & \triangleq \bm{x} - P_{\bm{\Omega}}(\bm{x} - \alpha F(\bm{x})),\\
\widetilde{H}(\bm{x},\bm{y}) & \triangleq \bm{x} - P_{\bm{\Omega}}(\bm{x} - \alpha G(\bm{x},\bm{1}_N\otimes \sigma(\bm{x}) + \bm{y})),\\
\bm{\xi}(\bm{x},\bm{y}) & \triangleq  \widetilde{H}(\bm{x},\bm{y}) - H(\bm{x})
\end{align*}

We verify the following three properties.
\begin{enumerate}[1)]
\item $\|\bm{\xi}(\bm{x},\bm{y})\| \leq \alpha\cdot \kappa_2 \|\bm{y}\|$.
\item The map $F$ is $\kappa$-Lipschitz continuous.
\item The map $H$ is $\omega_1$-strongly monotone.
\end{enumerate}
Property 1) holds because
\begin{align*}
\|\bm{\xi}(\bm{x},\bm{y})\| & = \|\widetilde{H}(\bm{x},\bm{y}) - H(\bm{x})\| \\
& = \|P_{\bm{\Omega}}(\bm{x} - \alpha F(\bm{x})) \\
& \quad \quad - P_{\bm{\Omega}}(\bm{x} - \alpha G(\bm{x},\bm{1}_N\otimes \sigma(\bm{x}) + \bm{y}))\|\\
& \leq \alpha \|F(\bm{x}) - G(\bm{x},\bm{1}_N\otimes \sigma(\bm{x}) + \bm{y}))\|\\
& \leq \alpha\cdot \kappa_2 \|\bm{y}\|.
\end{align*}
Property 2) follows from the fact that
\begin{align*}
{}&{} \|F(\bm{y}) - F(\bm{x})\|  \\
= {}&{}\|G(\bm{y},\bm{1}_N\otimes \sigma(\bm{y})) - G(\bm{x},\bm{1}_N\otimes \sigma(\bm{x}))\| \\
\leq {}&{} \|G(\bm{y},\bm{1}_N\otimes \sigma(\bm{x}) - G(\bm{x},\bm{1}_N\otimes \sigma(\bm{x}))\|\\
{}&{} + \|G(\bm{y},\bm{1}_N\otimes \sigma(\bm{y})) - G(\bm{y},\bm{1}_N\otimes \sigma(\bm{x}))\|\\
\leq {}&{} \kappa_1 \|\bm{y} - \bm{x}\| + \kappa_2\cdot \kappa_3 \|\bm{y} - \bm{x}\|.
\end{align*}
Property 3) holds because
\begin{equation*}
\begin{aligned}
{}&{}(\bm{x} - \bm{y})^T(H(\bm{x}) - H(\bm{y}))\\
= {}&{} \|\bm{x} - \bm{y}\|^2 - (\bm{x} - \bm{y})^T\cdot\\
{}&{} \quad \quad \quad \quad \quad \quad (P_{\bm{\Omega}}(\bm{x} - \alpha F(\bm{x}))-P_{\bm{\Omega}}(\bm{y} - \alpha F(\bm{y})))\\
\geq{}&{} \|\bm{x} - \bm{y}\|(\|\bm{x} - \bm{y}\| \\
{}&{} \quad \quad \quad \quad - \|P_{\bm{\Omega}}(\bm{x} - \alpha F(\bm{x}))-P_{\bm{\Omega}}(\bm{y} - \alpha F(\bm{y}))\|)\\
\geq{}&{} \|\bm{x} - \bm{y}\|(\|\bm{x} - \bm{y}\| - \|\bm{x} - \alpha F(\bm{x})-(\bm{y} - \alpha F(\bm{y}))\|,
\end{aligned}
\end{equation*}
and
\begin{equation*}
\begin{aligned}
{}&{} \|\bm{x} - \bm{y}\| - \|\bm{x} - \alpha F(\bm{x})-(\bm{y} - \alpha F(\bm{y}))\|\\
= {}&{}\frac{\|\bm{x} - \bm{y}\|^2 - \|\bm{x} - \alpha F(\bm{x})-(\bm{y} - \alpha F(\bm{y}))\|^2}{\|\bm{x} - \bm{y}\| + \|\bm{x} - \alpha F(\bm{x})-(\bm{y} - \alpha F(\bm{y}))\|}\\
\geq {}&{} \frac{2\alpha (\bm{x} - \bm{y})^T(F(\bm{x}) - F(\bm{y})) - \alpha^2 \|F(\bm{x}) - F(\bm{y})\|^2}{(2+\alpha\cdot \kappa)\|\bm{x} - \bm{y}\|}\\
\geq {}&{} \frac{2\alpha\cdot\mu - \alpha^2\cdot \kappa^2}{2+\alpha\cdot \kappa}\|\bm{x} - \bm{y}\|.
\end{aligned}
\end{equation*}
In addition, there holds the identity $H(\bm{x}^*) = \bm{0}$, since $\bm{x}^*$ is the Nash equilibrium.

Consider the following Lyapunov candidate function
\begin{equation*}
V_1(\bm{x}) = \frac{1}{2}\|\bm{x} - \bm{x}^*\|^2.
\end{equation*}
Its time derivative along the trajectory of \eqref{eq:subsystemX} is
\begin{equation*}
\begin{aligned}
\dot{V}_1 & = - (\bm{x} - \bm{x}^*)^T \widetilde{H}(\bm{x},\bm{y})\\
& = -(\bm{x} - \bm{x}^*)^T(H(\bm{x})+ \bm{\xi}(\bm{x},\bm{y}))\\
& = -(\bm{x} - \bm{x}^*)^T(H(\bm{x}) - H(\bm{x}^*)) -(\bm{x} - \bm{x}^*)^T \bm{\xi}(\bm{x},\bm{y})\\
& \leq - \omega_1\|\bm{x} - \bm{x}^*\|^2 + \|\bm{x} - \bm{x}^*\|\|\bm{\xi}(\bm{x},\bm{y})\|\\
& \leq - \omega_1\|\bm{x} - \bm{x}^*\|^2 + \xi_1\|\bm{x} - \bm{x}^*\|\|\bm{y}\|.
\end{aligned}
\end{equation*}

Next, we focus on dynamics \eqref{eq:subsystemY}.
Let
\begin{align*}
\bm{\zeta}(\bm{x},\bm{y}) & \triangleq \frac{d}{dt}(\bm{\varphi}(\bm{x}) - \bm{1}_N\otimes \sigma(\bm{x}))\\
& = \big(\nabla\bm{\varphi}(\bm{x}) - \bm{1}_N\otimes \nabla\sigma(\bm{x})\big)^T\cdot\\
& \quad  \quad \big(P_{\Omega}\big(\bm{x} - \alpha  G(\bm{x},\bm{1}_N\otimes \sigma(\bm{x}) + \bm{y})\big)-\bm{x}\big),
\end{align*}
where the time derivative $\dot{\bm{x}}$ is along the dynamics \eqref{eq:subsystemX}.

Clearly, $\bm{1}_N^T\otimes I_m \bm{\zeta}(\bm{x},\bm{y}) = \bm{0}$. Also, since
\begin{align*}
{}&{} \|P_{\Omega}\big(\bm{x} - \alpha  G(\bm{x},\bm{1}_N\otimes \sigma(\bm{x}) + \bm{y})\big) - \bm{x}^*\|\\
\leq {}&{} \|P_{\Omega}\big(\bm{x} - \alpha  G(\bm{x},\bm{1}_N\otimes \sigma(\bm{x}) + \bm{y})\big) \\
{}&{} \quad \quad \quad \quad \quad \quad - P_{\Omega}\big(\bm{x} - \alpha  G(\bm{x},\bm{1}_N\otimes \sigma(\bm{x}))\big)\|\\
{}&{} + \|P_{\Omega}\big(\bm{x} - \alpha  F(\bm{x})\big) - P_{\Omega}\big(\bm{x}^* - \alpha  F(\bm{x}^*)\big)\|\\
\leq {}&{} \alpha \cdot \kappa_2\|\bm{y}\| + \|\bm{x} - \bm{x}^*\| + \alpha \cdot \kappa \|\bm{x} - \bm{x}^*\|,
\end{align*}
there holds
\begin{align*}
\|\bm{\zeta}(\bm{x},\bm{y})\| & \leq \|\nabla\bm{\varphi}(\bm{x}) - \bm{1}_N\otimes \nabla\sigma(\bm{x})\|\cdot\\
& \quad \quad \|P_{\Omega}\big(\bm{x} - \alpha  G(\bm{x},\bm{1}_N\otimes \sigma(\bm{x}) + \bm{y})\big)-\bm{x}\|\\
& \leq \kappa_3 \|P_{\Omega}\big(\bm{x} - \alpha  G(\bm{x},\bm{1}_N\otimes \sigma(\bm{x}) + \bm{y})\big) - \bm{x}^*\| \\
& \quad + \kappa_3\|\bm{x} - \bm{x}^*\|\\
& \leq \kappa_3\cdot(2 + \alpha\cdot \kappa)\|\bm{x} - \bm{x}^*\| + \alpha \cdot \kappa_2\cdot \kappa_3\|\bm{y}\|.
\end{align*}

Let
\begin{align*}
\widehat{\bm{y}} & \triangleq \frac{1}{N}\bm{1}_N\bm{1}_N^T\otimes I_m \bm{y},\\
\widehat{\bm{y}}^\bot & \triangleq (I_N - \frac{1}{N}\bm{1}_N\bm{1}_N^T)\otimes I_m \bm{y}.
\end{align*}
Then $\bm{y} = \widehat{\bm{y}} + \widehat{\bm{y}}^\bot$. Since $\bm{1}_N^TL = \bm{0}_N^T$, it follows from \eqref{eq:subsystemY} that
\begin{equation*}
\dot{\widehat{\bm{y}}} = \bm{0}, \quad \widehat{\bm{y}}(0) = \bm{0}.
\end{equation*}
As a result,
\begin{equation*}
\widehat{\bm{y}}(t) = \bm{0}, \quad \bm{y}(t) = \widehat{\bm{y}}^\bot(t),\quad \forall\,t\geq 0.
\end{equation*}

Consider the following Lyapunov candidate function
\begin{equation*}
V_2(\bm{y}) = \frac{1}{2}\|\bm{y}\|^2.
\end{equation*}
The time derivative of $V_2$ along the trajectory of \eqref{eq:subsystemY} is
\begin{equation*}
\begin{aligned}
\dot{V}_2 & = - \beta \bm{y}^T(L\otimes I_m)\bm{y} + \bm{y}^T \bm{\zeta}(\bm{x},\bm{y}) \\
& = - \beta \bm{y}^T \bigg(\frac{1}{2}(L+L^T)\otimes I_m\bigg)\bm{y} + \bm{y}^T \bm{\zeta}(\bm{x},\bm{y})\\
& = - \beta (\widehat{\bm{y}}^\bot)^T \bigg(\frac{1}{2}(L+L^T)\otimes I_m\bigg)\widehat{\bm{y}}^\bot + \bm{y}^T \bm{\zeta}(\bm{x},\bm{y})\\
& \leq - \beta \cdot \lambda_2 \|\widehat{\bm{y}}^\bot\|^2 + \bm{y}^T \bm{\zeta}(\bm{x},\bm{y}),
\end{aligned}
\end{equation*}
where the last inequality follows from Rayleigh quotient theorem \cite[Page 234]{Horn2013Matrix}.
Also, since $\bm{y}(t) = \widehat{\bm{y}}^\bot(t), \, \forall\,t\geq 0$,
\begin{equation*}
\begin{aligned}
\dot{V}_2 & \leq - \beta \cdot \lambda_2 \|\widehat{\bm{y}}^\bot\|^2 + \bm{y}^T \bm{\zeta}(\bm{x},\bm{y})\\
& =  - \beta\cdot \lambda_2 \|\bm{y}\|^2 + \bm{y}^T \bm{\zeta}(\bm{x},\bm{y})\\
& \leq - (\beta \cdot \lambda_2 - \alpha\cdot \kappa_2\cdot \kappa_3) \|\bm{y}\|^2 \\
& \quad + \kappa_3\cdot(2 + \alpha\cdot \kappa) \|\bm{y}\|\|\bm{x} - \bm{x}^*\|\\
& = - \omega_2 \|\bm{y}\|^2 + \xi_2 \|\bm{y}\|\|\bm{x} - \bm{x}^*\|.
\end{aligned}
\end{equation*}

Combining $V_1$ and $V_2$, let
\begin{equation*}
V \triangleq \xi_2 V_1 + \xi_1V_2
\end{equation*}
The time derivative of $V$ along the trajectory of \eqref{eq:subsystemX} and \eqref{eq:subsystemY} is
\begin{equation*}
\begin{aligned}
\dot{V} & \leq -\xi_2\omega_1\|\bm{x} - \bm{x}^*\|^2 + 2\xi_2\xi_1\|\bm{x} - \bm{x}^*\|\|\bm{y}\| - \xi_1\omega_2\|\bm{y}\|^2\\
& = - \gamma^* V - (\omega_1 - \frac{\gamma^*}{2})\xi_2\|\bm{x} - \bm{x}^*\|^2 - (\omega_2 - \frac{\gamma^*}{2})\xi_1\|\bm{y}\|^2 + 2\xi_1\xi_2\|\bm{x} - \bm{x}^*\|\|\bm{y}\|\\
& \leq - \gamma^* V.
\end{aligned}
\end{equation*}
Therefore, the algorithm converges to the Nash equilibrium with the exponential convergence rate $\gamma^*$.
\end{proof}

\begin{remark}
{\em
Exponential convergence of distributed algorithms has become a research topic in recent years. \cite{Nedich2017Achieving} has designed a distributed discrete-time optimization algorithm and proves its exponential convergence via a small-gain approach, while \cite{Liang2019Exponential} has introduced a criterion for the exponential convergence of distributed primal-dual gradient algorithms in either continuous or discrete time. Theorem \ref{thm:convergence} provides an exponential convergence result by analyzing the interconnected subsystems.
}
\end{remark}

\section{Numerical Example}
Consider a Cournot game played by $N = 20$ competitive players. For $i\in\mathcal{V}=\{1,...,N\}$, the cost function $\vartheta_i(x_i,\sigma)$ and strategy set $\Omega_i$ are
\begin{align*}
\vartheta_i(x_i,\sigma) & = a_ix_i^2 + b_ix_i + c_ix_i\sigma(\bm{x}),\\
\Omega_i & = \bigg[ - 1 - \frac{1}{2i}, \, \frac{i}{10} + \frac{1}{\sqrt{i}}\bigg],
\end{align*}
where
\begin{align*}
a_i & = 0.1 + 0.01*\sin(i), \quad b_i = \frac{i - \ln(i)}{1 + i + i^3},\\
c_i & = 0.003*\cos(i),
\end{align*}
and
\begin{equation*}
\sigma(\bm{x}) = \frac{1}{N}\sum_{j=1}^Nx_j.
\end{equation*}
It can be verified that the game mode satisfies \textbf{A1}-\textbf{A4} with constants $\mu = 0.1770, \kappa_1 = 0.2199, \kappa_2 = 0.0030, \kappa_3 = 1$.
We adopt a network graph as shown in Fig. \ref{fig:topology}, which satisfies \textbf{A5}.
\begin{figure}[h]
\begin{center}
\includegraphics[width=7cm]{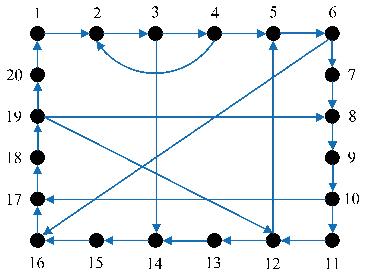}    
\caption{The communication graph of the agents.}\label{fig:topology}
\end{center}
\end{figure}

To render condition \eqref{eq:parametersCondition}, we assign $\alpha = 3$ and $\beta = 1$. The trajectory of strategy profile generated by our algorithm is shown in Fig. \ref{fig:trajectory}.
\begin{figure}[h]
\begin{center}
\includegraphics[width=8.4cm]{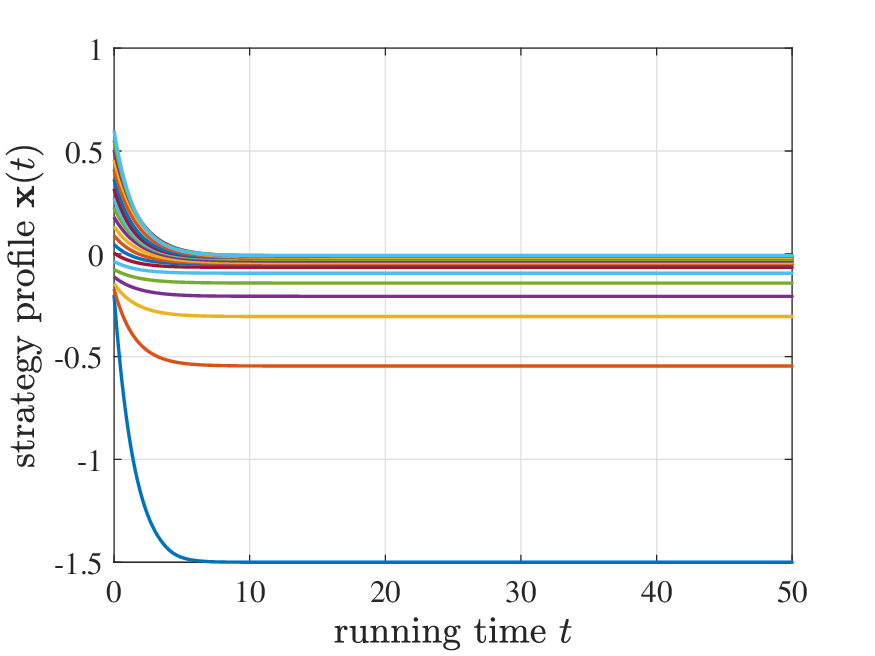}    
\caption{The trajectory of strategy profile generated by our distributed algorithm.}\label{fig:trajectory}
\end{center}
\end{figure}

In order to make some comparisons, we also use directed cycle graph and undirected Erdos-Renyi (ER) graph for the algorithm. The performance of the algorithm with these graphs is shown in Fig. \ref{fig:error}.
\begin{figure}[h]
\begin{center}
\includegraphics[width=8.4cm]{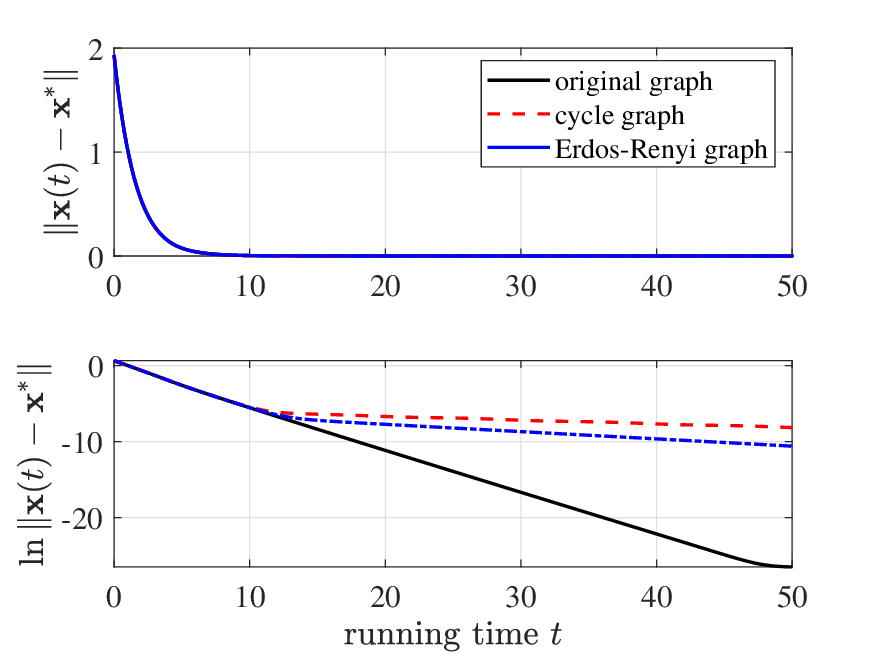}    
\caption{Performance of the algorithm with different graphs.}\label{fig:error}
\end{center}
\end{figure}

These results indicate that our distributed algorithm exponentially converges to the Nash equilibrium.

Finally, we use the undirected ER graph to compare our algorithm with the one given in \cite{Liang2017Distributed}. The numerical results are shown in Fig. \ref{fig:comparison}. It indicates that our algorithm converges faster than that algorithm. In addition, only our algorithm applies to directed graphs such as the original graph and the directed cycle graph.
\begin{figure}[h]
\begin{center}
\includegraphics[width=8.4cm]{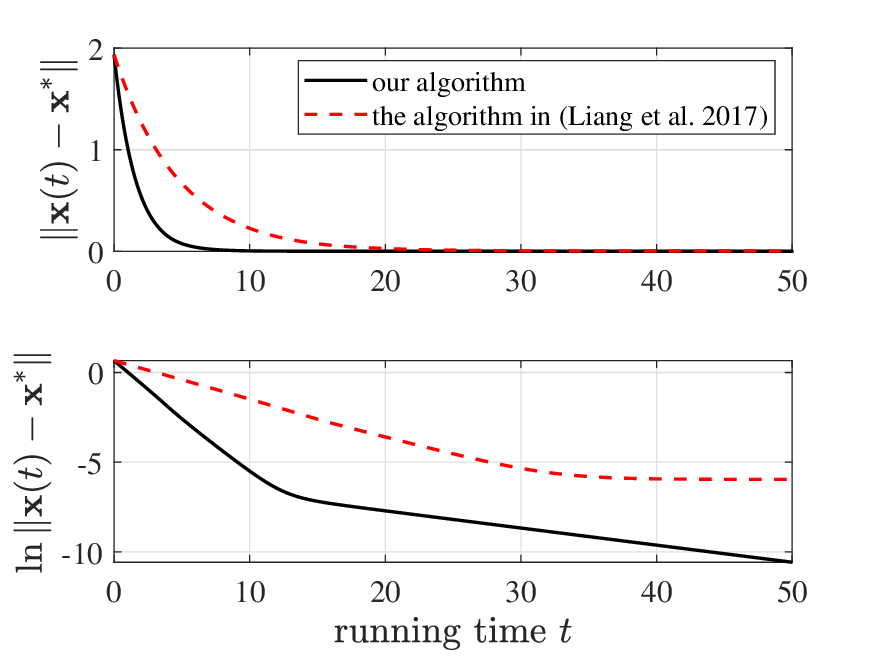}    
\caption{Performance comparison of the two distributed algorithms.}\label{fig:comparison}
\end{center}
\end{figure}
\section{Conclusions}
A distributed algorithm has been proposed for Nash equilibrium seeking of aggregative games, where the strategy set can be constrained and the network is described by a weight-balanced graph. The exponential convergence has been established.
The effectiveness of our method has also been illustrated by a numerical example.
Further work may consider generalized Nash equilibrium seeking problem for aggregative games with coupled constraints.

\section*{Declarations}
The authors confirm that there are no known conflicts of interest associated with this publication and there has been no
significant financial support for this work that could have influenced its outcome.

We confirm that the manuscript has been read and approved by all named authors and that
there are no other persons who satisfied the criteria for authorship but are not listed. We
further confirm that the order of authors listed in the manuscript has been approved by all of
us.

This paper was supported in part by National Natural Science Foundation of China
under Grant 61903027,72171171,62003239, and in part by Shanghai Municipal Science and Technology Major
Project under grant 2021SHZDZX0100, and in part by Shanghai Sailing Program under Grant Nos. 20YF1453000.

\bibliography{refference0}

\end{document}